\newenvironment{proof}{{\bf Proof}:\ }%
   {~\ \hfill $\Box$\vspace{0,5cm}}
\newenvironment{ack}{\vskip5mm{\bf Acknowledgements:}}%
\newtheorem{theorem}{Theorem}[section]
\newtheorem{definition}{Definition}[section]
\numberwithin{equation}{section}
\begin{document} 
\title{Minimal graphs for hamiltonian extension}

\author{
C.\ Picouleau \footnotemark[1]
}
\date{\today}

\def\thefootnote{\fnsymbol{footnote}}

\footnotetext[1]{ \noindent
Conservatoire National des Arts et M\'etiers, CEDRIC laboratory, Paris (France). Email: {\tt
christophe.picouleau@cnam.fr}
}

\maketitle 

\begin{abstract}
For every $n\ge 3$ we determine the minimum number of edges of graph with $n$ vertices such that for any non edge $xy$ there exits a hamiltonian  cycle containing $xy$.

 \vspace{0.2cm}
\noindent{\textbf{Keywords}\/}: $2$-factor, hamiltonian cycle, hamiltonian path.
\end{abstract}

\section{Introduction}\label{intro}
For all graph theoretical terms and notations not defined here the reader is referred to \cite{Bondy}.
We only consider simple finite loopless undirected graphs.  For  a graph $G=(V,E)$ with $\vert V\vert=n$ vertices, an edge is a pair of two connected vertices $x,y$, we denote it by $xy,xy\in E$; when two vertices $x,y$ are not connected this pair form the {\it non-edge} $xy,xy\not\in E$. In $G$ a $2$-factor is a subset of edges $F\subset E$ such that every vertex is incident to exactly two edges of $F$. Since $G$ is finite a $2$-factor consists of a collection of vertex disjoint cycles spanning the vertex set $V $. When the collection consists of an unique cycle the $2$-factor is connected, so it is a hamiltonian  cycle. \\

We intend to determine, for any integer $n\ge 3$, a graph $G=(V,E),n=\vert V\vert$ with a minimum number of edges such that for every non-edge $xy$ it is always possible to include the non-edge $xy$ into a connected $2$-factor, i.e., the graph $G_{xy}=(V,E\cup \{xy\})$ has a hamiltonian cycle $H,xy\in H$. In other words for any non-edge $xy$ of $G$ there exits a hamiltonian path between $x$ and $y$. \\

This problem is related to the minimal $2$-factor extension studied in \cite{CWP} in which the $2$-factors are not necessary connected. It is also related to
the problem of finding minimal graphs for non-edge extensions in the case of perfect matchings  ($1$-factors) studied in \cite{Costa}.

\begin{definition}Let $G=(V,E)$ be a graph and $xy\not\in E$ an non-edge. If $G_{xy}=(V,E\cup \{xy\})$ has a hamitonian cycle  that contains $xy$ we shall say that $xy$ has been {\it extended} (to a connected $2$-factor, to an hamiltonian cycle).
\end{definition}

\begin{definition} A graph $G=(V,E)$ is {\it connected 2-factor expandable} or {\it hamiltonian expandable} (shortly {\it expandable}) if every non-edge $xy\not\in E$ can be extended.
\end{definition}

\begin{definition} An expandable graph $G=(V,E)$ with $\vert V \vert=n$ and a minimum number of edges is a {\it minimum expandable graph}. The size $\vert E\vert$ of its edge set is denoted by $Exp_h(n)$.	\end{definition}

The case where the $2$-factor is not constrained to be hamiltonian is studied in \cite{CWP}. In this context $Exp_2(n)$ denotes the size of a {\it minimum expandable graph} with $n$ vertices. It follows that $Exp_h(n)\ge Exp_2(n)$.\\

We use the following notations. For $G=(V,E)$, $N(v)$ is the set of neighbors of a vertex $v$, $\delta(G)$ is the minimum degree of a vertex. A vertex with exactly $k$ neighbors is a $k$-vertex. When $P=v_i,\ldots,v_j$ is a sequence of vertices that corresponds to a path in $G$, we denote by ${\bar P}=v_j,\ldots,v_i$ its mirror sequence (both sequences correspond to the same path).\\

We state our result.

\begin{theorem}
	\label{PROP}
	The minimum size of a connected $2$-factor expandable graph is:
	  $$Exp_h(3)=2,Exp_h(4)=4,Exp_h(5)=6; Exp_h(n)= \lceil {3\over 2}n\rceil,n\ge 6$$
\end{theorem}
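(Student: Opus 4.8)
The plan is to prove the formula in two parts: handle the small orders $n\in\{3,4,5\}$ by direct analysis, and for $n\ge 6$ establish the lower bound $Exp_h(n)\ge\lceil 3n/2\rceil$ together with a matching construction.

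For $n\in\{3,4,5\}$ the extremal examples are $P_3$, the paw (a triangle with one pendant vertex), and the bowtie (two triangles sharing a vertex), respectively; in each case one checks directly that every non-edge admits a hamiltonian path. For the lower bounds I would argue by a short case analysis on the degree sequence, using the elementary fact that a vertex $v$ of degree $1$ forces $G-v$ to be complete (a non-edge inside $V\setminus\{v\}$ would require a hamiltonian path through $v$, and $v$ can only be an endpoint of such a path). Hence a minimum expandable graph on $4$ or $5$ vertices has $\delta\ge 2$, and one eliminates the remaining sparse candidates one by one ($P_4$ and $K_{1,3}$ for $n=4$; $C_5$ for $n=5$, whose only hamiltonian paths join adjacent vertices). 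Along the way I would also prove that the bowtie is the \emph{unique} minimum expandable graph on $5$ vertices, a fact needed below.

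The engine for $n\ge 6$ is a reduction lemma: if $G$ is expandable and $v$ has degree $2$ with $N(v)=\{a,b\}$, then $ab\in E(G)$ (otherwise a hamiltonian $a$--$b$ path would have to use $v$ internally, forcing it to be the path $a,v,b$, impossible once $n\ge 4$), and $G-v$ is expandable on $n-1$ vertices (for a non-edge of $G-v$, a hamiltonian path of $G$ through $v$ enters and leaves $v$ via $a$ and $b$, so replacing the subpath $a,v,b$ by the edge $ab$ gives the desired path). Together with the degree-$1$ observation this yields the lower bound by induction. If $\delta(G)\ge 3$ then $2|E(G)|=\sum_v\deg v\ge 3n$, so $|E(G)|\ge\lceil 3n/2\rceil$ (for odd $n$ the parity of $\sum_v\deg v$ upgrades this). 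If $\delta(G)=1$ the graph has at least $\binom{n-1}{2}+1$ edges, far more than $\lceil 3n/2\rceil$ for $n\ge 6$. If $\delta(G)=2$, delete a degree-$2$ vertex: by the reduction lemma $|E(G)|\ge Exp_h(n-1)+2$, and for $n\ge 7$ the inductive hypothesis gives $Exp_h(n-1)\ge\lceil 3(n-1)/2\rceil$, whence $|E(G)|\ge\lceil 3(n-1)/2\rceil+2\ge\lceil 3n/2\rceil$. The base case $n=6$ is done separately: if $G$ were expandable with $8$ edges and $\delta(G)=2$, then deleting a degree-$2$ vertex would make $G-v$ a minimum expandable graph on $5$ vertices, hence the bowtie; but the bowtie has a cut vertex $c$ whose removal isolates a pair $\{r,s\}$, and in $G=\textrm{bowtie}+v$ the set $\{r,s\}$ is still separated by $c$, so every hamiltonian path of $G$ has an endpoint in $\{r,s\}$ — contradicting the existence of a hamiltonian path for some non-edge avoiding both $r$ and $s$.

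For the upper bound when $n\ge 6$ I would exhibit, for each residue class of $n$, an expandable graph with exactly $\lceil 3n/2\rceil$ edges. For $n\equiv 2\pmod 4$ the prism $C_{n/2}\times K_2$ over an odd cycle is a natural candidate, and for $n\equiv 0\pmod 4$ the Möbius ladder $M_n$: both are $3$-regular and non-bipartite, so hamiltonian paths between non-adjacent pairs are plausible, and their large automorphism groups reduce the verification to a few parametrized families of non-edges. For odd $n$ one attaches to the $(n-1)$-vertex construction a new degree-$2$ vertex forming a triangle with a carefully chosen edge, which adds exactly two edges and keeps the count at $\lceil 3n/2\rceil$. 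The main obstacle, I expect, is precisely this verification: one must pin down base graphs (and, in the odd case, the attachment edge) for which \emph{every} non-edge — including those created by the new vertex — is spanned by an explicit hamiltonian path, and producing these paths uniformly across the residue classes is the delicate part; by contrast, once the reduction lemma is available the lower bound is essentially forced.
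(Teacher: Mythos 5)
Your lower bound is correct and takes a genuinely different route from the paper's. The paper never deletes a vertex: for $\delta(G)=2$ it shows $u_1u_2\in E$ for $N(v)=\{u_1,u_2\}$ and then, after a short case analysis on $d(u_1),d(u_2)$, assigns to each $2$-vertex a distinct vertex of degree at least $4$, so that $\sum_v d(v)\ge 3n$ holds directly; the small cases $n\le 6$ are imported from the $2$-factor paper via $Exp_h(n)\ge Exp_2(n)$. Your reduction lemma (a $2$-vertex $v$ forces $ab\in E$ on its neighbourhood, and splicing the subpath $a,v,b$ into the edge $ab$ shows $G-v$ is expandable) is sound and yields $Exp_h(n)\ge Exp_h(n-1)+2$, which closes the induction for $n\ge7$ since $\lceil 3(n-1)/2\rceil+2\ge\lceil 3n/2\rceil$; your base case $n=6$ via uniqueness of the bowtie also works (the enumeration of $6$-edge, $5$-vertex expandable graphs with $\delta\ge2$ does single out the bowtie), at the cost of a uniqueness statement the paper does not need. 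This inductive argument is arguably cleaner than the paper's counting.

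The genuine gap is the upper bound, i.e.\ half of the theorem. You name candidate extremal graphs (prisms over odd cycles, M\"obius ladders, and for odd $n$ a new $2$-vertex glued onto an edge of the $(n-1)$-vertex graph) but verify none of them, and you yourself flag the verification as the delicate part. Two concrete difficulties. First, for odd $n$ the attachment trick needs more than Hamilton-connectedness of the base graph: if the new vertex $v$ forms a triangle on the edge $ab$, then every \emph{old} non-edge $xy$ must be spanned by a hamiltonian path of the base graph that \emph{uses the edge $ab$} (so that $a,v,b$ can be spliced in), and every new non-edge $vx$ needs a hamiltonian path from $a$ or $b$ to $x$; you give no argument that a suitable edge $ab$ exists. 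Second, even for even $n$ the existence of hamiltonian paths between all non-adjacent pairs of the prism or M\"obius ladder is asserted as ``plausible'' rather than proved. The paper sidesteps both issues with one uniform construction per parity (two $p$-cycles joined by a twisted perfect matching for even $n$; two cycles sharing a vertex plus a matching for odd $n$) and writes the hamiltonian paths down explicitly; to complete your proof you would have to do the analogous explicit work for your candidates.
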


\begin{proof}
For $n\ge 3$ we have $Exp_h(n)\ge Exp_2(n)$.

 In \cite{CWP} it is proved that the three graphs given by Fig. \ref{G345} are minimum  for $2$-factor extension. They are also minimum expandable for connected $2$-factor extension.\\
\begin{figure}[htbp]
			\begin{center}
				\includegraphics[width=8cm, height=5cm, keepaspectratio=true]{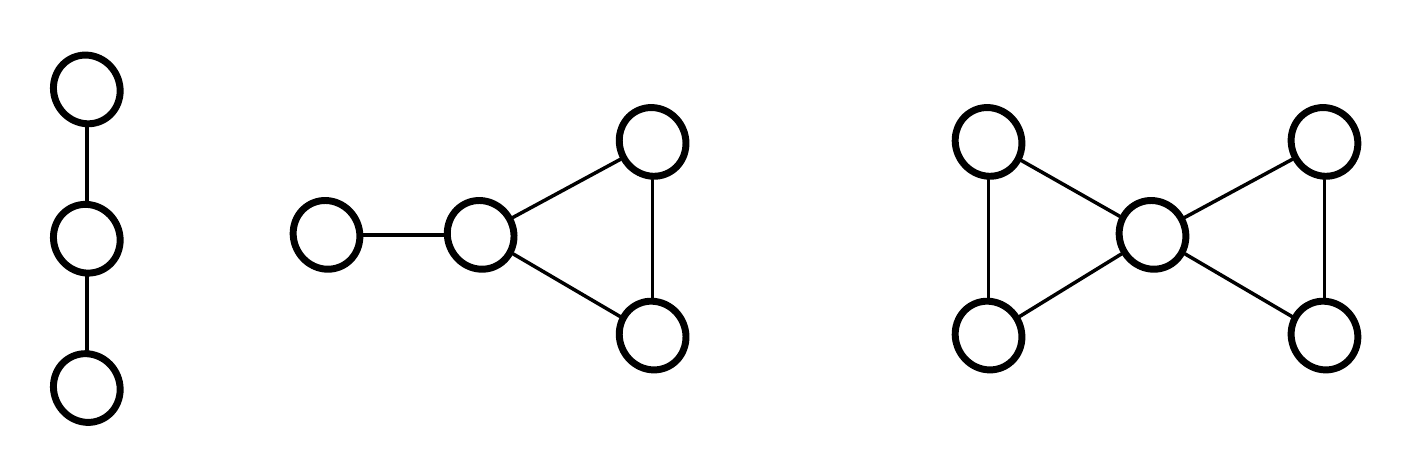}
			\end{center}
			\caption{$P_3$, the paw, the butterfly.}
			\label{G345}
		\end{figure}
		
Now let $n\ge 6$. From  \cite{CWP} we know the following when  $G$ a minimum expandable graph for the $2$-factor extension:

\begin{itemize}
\item $G$ is connected;
\item if $\delta(G)=1$ then $Exp_2(n)\ge {3\over 2}n$;
\item for $n\ge 7$, if $u,v$ are two $2$-vertices such that $N(u)\cap N(v)\ne\emptyset$ then  $Exp_2(n)\ge {3\over 2}n$;
\end{itemize}

\begin{figure}[htbp]
			\begin{center}
				\includegraphics[width=6cm, height=2.5cm, keepaspectratio=true]{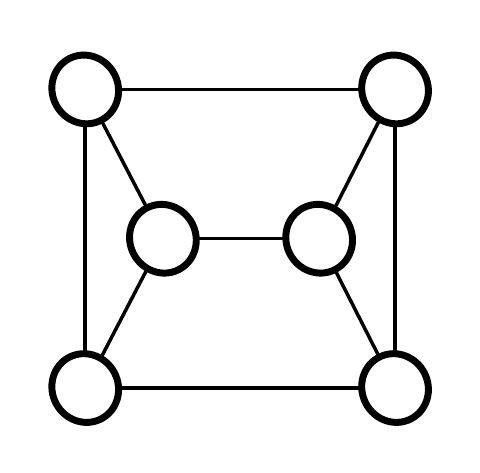}
			\end{center}
			\caption{A minimum hamiltonian expandable graph with $6$ vertices.}
			\label{G6}
		\end{figure}
The graph given by Fig. \ref{G6} is minimum  for $2$-factor extension (see \cite{CWP}). One can check that it is expandable for connected $2$-factor extension. So we have $Exp_h(6)=9={3\over 2}n$.\\

Suppose that $G$ is a minimum expandable graph with $n\ge 7$ and $\delta(G)=2$. Let $v\in V$ with $d(v)=2$, $N(v)=\{u_1,u_2\}$. If $u_1u_2\not\in E$ then $u_1u_2$ cannot be expanded into a hamiltonian cycle. So $u_1u_2\in E$. If $d(u_1)=2$ then $u_2\in N(u_1)\cap N(v)$ and  $Exp_h(n)\ge {3\over 2}n$.  So from now one we may assume $d(u_1),d(u_2)\ge 3$. Suppose that $d(u_1)=d(u_2)=3$. Let $N(u_1)=\{v,u_2,v_1\}, N(u_2)=\{v,u_1,v_2\}$. If $v_1\ne v_2$ then $u_1v_2$ is not expandable. If  $v_1= v_2$ then $vv_1$ is not expandable. From now we can suppose that $d(u_1)\ge 3,d(u_2)\ge 4$. Moreover $v$ is the unique $2$-vertex in $N(u_2)$. It follows that  every $2$-vertex $u\in V$ can be matched with a distinct vertex $u_2$ with $d(u_2)\ge 4$. Then $\Sigma_{v\in V}d(v)\ge 3n$ and thus $m\ge {3\over 2}n$.\\

When  $\delta(G)\ge3$ we have $\Sigma_{v\in V}d(v)\ge 3n$. Thus for any expandable graph we have $\vert E\vert=m\ge {3\over 2}n,n\ge 7$.\\

For any even integer $n\ge 8$ we define the graph $G_n=(V,E)$ as follows. Let $n=2p$, $V=A\cup B$ where $A=\{a_1,\ldots,a_p\}$ and $B= \{b_1,\ldots,b_p\}$. $A$ (resp. $B$) induces  the cycle $C_A=(A,E_A)$ with $E_A=\{a_1a_2,a_2a_3,\ldots,a_pa_1\}$ (resp.  $C_B=(B,E_B)$ with $E_B=\{b_1b_2,b_2b_3,\ldots,b_pb_1\}$. Now $E=E_A\cup E_B\cup E_C$ with $E_C=\{a_2b_2,a_3b_3,\ldots,a_{p-1}b_{p-1},a_1b_p,a_pb_1\}$. Note that $G_n$ is cubic so $m= {3\over 2}n$. (see $G_{10}$ in Fig. \ref{G71011})\\

\begin{figure}[htbp]
			\begin{center}
				\includegraphics[width=12cm, height=3.5cm, keepaspectratio=true]{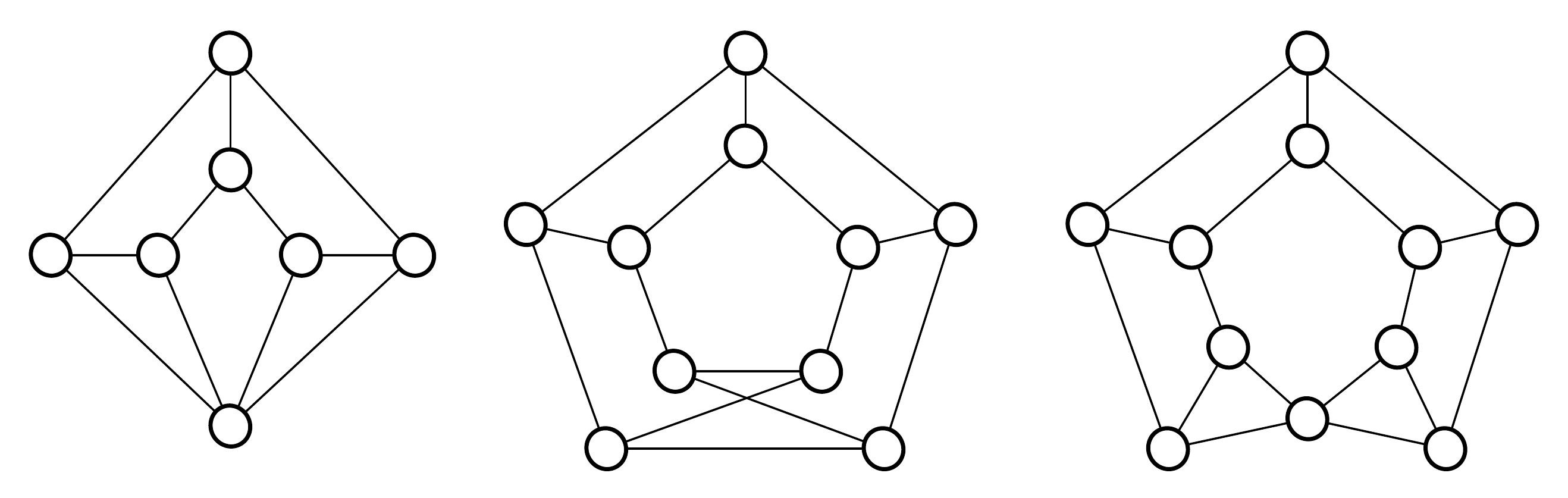}
			\end{center}
			\caption{The graphs $G_7,G_{10},G_{11}$, from the left to the right.}
			\label{G71011}
		\end{figure}

We show that $G_n$ is expandable. First we consider a non-edge $a_ia_j,p\ge j>i\ge 1$. Note that the case of a non-edge $b_ib_j$ is analogous. We have $j\ge i+2$ and since $a_1a_p\in E$ from symmetry we can suppose that $j<p$. Let $P=a_j,a_{j-1},\ldots,a_{i+1},b_{i+1},b_{i+2},\ldots,b_{j+1},a_{j+1},a_{j+2},b_{j+2},\ldots,c_j$ where $c_j$ is either $a_p$ or $b_p$ and let $Q=a_i,b_i,b_{i-1},a_{i-1},\ldots,c_i$  where $c_i$ is either $a_1$ or $b_1$. From $P$ and $Q$ one can obtain an hamiltonian cycle containing $a_ib_j$ whatever $c_i$ and $c_j$ are.\\

Now we consider a non-edge $a_ib_j$. Without loss of generality we assume $j\ge i$. Suppose first that $j=i$, so either $i=1$ or $i=p$.  Without loss of generality we assume $i=j=1$: $a_1,b_p,b_{p-1},\ldots, b_2,a_2,a_3,\ldots, a_p,b_1,a_1$ is an hamiltonian cycle. Now assume that $j>i$: Let $P_j=b_j,b_{j-1},\ldots,b_{i+1},a_{i+1},a_{i+2},\ldots,a_{j_+1},b_{j+1},b_{j+2}, a_{j+2},\ldots, c_p$ where either $c_p=a_p$ or $c_p=b_p$, $P_i=a_i,b_i,b_{i-1},a_{i-1},a_{i-2},\ldots,c_1$ where either $c_1=a_1$ or $c_1=b_1$. If $c_p=a_p$ and $c_1=a_1$ then $P_j,b_1,b_p,P_i,a_j$ is an hamiltonian cycle. If $c_p=a_p$ and $c_1=b_1$ then $P_j,a_1,b_p,P_i,a_j$ is an hamiltonian cycle. The two other cases are symmetric.\\

For any odd integer $n=2p+1\ge 7$ we define the graph $G_n=(V,E)$ as follows. We set $V=A\cup B\cup\{v_{n}\}$ where $A=\{a_1,\ldots,a_p\}$ and $B= \{b_1,\ldots,b_p\}$. $A\cup\{v_n\}$ (resp. $B\cup\{v_n\}$) induces  the cycle $C_A=(A\cup\{v_n\},E_A)$ with $E_A=\{a_1a_2,a_2a_3,\ldots,a_pv_n,v_na_1\}$ (resp.  $C_B=(B\cup\{v_n\},E_B)$ with $E_B=\{b_1b_2,b_2b_3,\ldots,b_pv_n,v_nb_1\}$. Now $E=E_A\cup E_B\cup E_C$ with $E_C=\{a_ib_i\vert 1\le i\le p\}\cup\{a_1v_n,b_1v_n,a_pv_n,b_pv_n\}$. Note that $m= \lceil{3\over 2}n\rceil$. (see $G_{7}$ and $G_{11}$ in Fig. \ref{G71011})\\

We show that $G_n$ is expandable. First, we consider a non-edge $a_ia_j,p\ge j>i\ge 1$ (the case of a non-edge $b_ib_j$ is analogous). $a_i,a_{i+1},\ldots,a_{j-1},b_{j-1},b_{j-2},b_{j-3},\ldots,b_i,b_{i-1},\\ a_{i-1},a_{i-2},b_{i-2},\ldots,v_n,c_p,d_p,d_{p-1},c_{p-1},\ldots, c_j,d_j$, where $d_j=a_j$ and for any $k,j\le k\le p,$ the ordered pairs $c_k,d_k$ correspond to either $a_k,b_k$ or $b_k,a_k$, is an hamiltonian cycle. Second, let a non-edge $a_ib_j,p\ge j>i\ge 1$. We use the same construction as above taking $d_j=b_j$.
\end{proof}

\begin{ack}

The author  express its gratitude to  Dominique de Werra for its constructive comments and remarks, which helped to improve the writing of this paper.
\end{ack}

\end{document}